\documentclass[11pt,reqno]{amsart}

\usepackage[T1]{fontenc}
\usepackage{lmodern}
\usepackage{microtype}
\usepackage{amsmath,amssymb,amsthm,mathtools}
\usepackage[colorlinks=true,citecolor=blue,linkcolor=blue,urlcolor=blue]{hyperref}

\newtheorem{theorem}{Theorem}[section]
\newtheorem{proposition}[theorem]{Proposition}
\newtheorem{lemma}[theorem]{Lemma}

\theoremstyle{definition}

\newtheorem{question}[theorem]{Question}
\theoremstyle{remark}
\newtheorem{remark}[theorem]{Remark}

\newcommand{\N}{\mathbb N}
\newcommand{\Zp}{\mathbb Z_+}

\title{$\Delta^*$-Mixing and Products of Full-Scattering Systems}

\author[H.~Xu]{Hui Xu}
\address[H.~Xu]{Department of Mathematics, Shanghai Normal University, Shanghai, 200234, CHINA}
\email{huixu@shnu.edu.cn}

\subjclass[2020]{Primary 37B20; Secondary 37B05, 05D10}
\keywords{$\Delta^*$-mixing, full-scattering system, topological complexity}

\begin{document}

\begin{abstract}
We resolve two questions posed by Huang and Ye concerning full-scattering
systems.  First, we construct a four-symbol subshift which is $\Delta^*$-mixing
but not full-scattering, thereby showing that $\Delta^*$-mixing does not imply
full-scattering.  Second, we prove that every finite product of full-scattering
systems is full-scattering.
\end{abstract}

\maketitle

\section{Introduction}

Throughout, a topological dynamical system $(X,T)$ consists of a compact
metric space $X$ and a continuous surjection $T\colon X\to X$.  If
$\mathcal U$ is a finite cover of $X$, let $N(\mathcal U)$ denote the least
cardinality of a subcover.  We use the convention
$\Zp=\{0,1,2,\ldots\}$.  For an infinite set
$A=\{a_1<a_2<\cdots\}\subset\Zp$, define
\[
 c_A(\mathcal U,n)
 =N\left(\bigvee_{i=1}^nT^{-a_i}\mathcal U\right).
\]
A finite open cover $\mathcal U=\{U_1,\ldots,U_k\}$ is \emph{non-trivial}
if no $U_i$ is dense in $X$.  The system $(X,T)$ is \emph{full-scattering}
if
\[
 c_A(\mathcal U,n)\longrightarrow\infty
\]
for every infinite $A\subset\Zp$ and every non-trivial finite open cover
$\mathcal U$.

The use of open-cover complexity to distinguish topological mixing properties
goes back to Blanchard, Host and Maass \cite{BHM}.  They introduced scattering
and $2$-scattering and related these notions to weak disjointness.  Huang and
Ye subsequently characterized scattering and strong scattering through weak
disjointness and constructed an explicit scattering system which is not
weakly mixing \cite{HY02}.  In \cite{HY}, they developed the sequence-dependent
notions of $S$-scattering and $\mathcal F$-scattering and proved, among other
things, that $2$-scattering is equivalent to scattering
\cite[Theorem~2.1]{HY}.

This work places full-scattering between two familiar mixing conditions:
strong mixing implies full-scattering
\cite[Theorem~5.6(1) and Lemma~5.7(1)]{HY}, while full-scattering implies mild
mixing and hence weak mixing \cite[Theorem~2.3]{HY}.  Neither
implication is reversible for general systems
\cite[Examples~A and B and Theorem~6.6]{HY}.
Under minimality, however, full-scattering is equivalent to strong mixing
\cite[Theorem~5.8]{HY}.  Thus full-scattering is a genuinely nonminimal
open-cover complexity condition, and its permanence properties do not follow
from the corresponding facts for the usual mixing classes.

Two related notions appear in \cite{HY}.  Let $\Delta$ be the family of
subsets of $\Zp$ containing all positive differences of some infinite subset
of $\Zp$, and let $\Delta^*$ be its dual family.  Thus $E\in\Delta^*$ if and
only if $E$ meets every member of $\Delta$.  For nonempty open sets $U,V\subset
X$, write
\[
 N(U,V)=\{n\in\Zp:U\cap T^{-n}V\neq\varnothing\}.
\]
A system is $\Delta^*$-transitive if $N(U,V)\in\Delta^*$ for all such $U,V$,
and it is $\Delta^*$-mixing if its Cartesian square is
$\Delta^*$-transitive.  Corollary~6.5 of \cite{HY} shows that full-scattering
implies $\Delta^*$-transitivity, while Theorem~2.3 gives weak mixing.  Every
$\Delta^*$-set meets every $\Delta$-set infinitely often: otherwise one could
thin the infinite set generating the latter difference set so that all its
differences avoid the finite intersection.  Thus $\Delta^*$ is a full family,
and Proposition~3.6 of \cite{HY} gives $\Delta^*$-mixing.  The converse was
left open and was still recorded as open in subsequent work on
Furstenberg-family mixing \cite[p.~9]{DA}.  Also, if
$A=\{a_1<a_2<\cdots\}$, put
\[
 h_A(T,\mathcal U)
 =\limsup_{n\to\infty}\frac1n
  \log N\left(\bigvee_{i=1}^nT^{-a_i}\mathcal U\right).
\]
A system is \emph{full-Bernoulli} if $h_A(T,\mathcal U)>0$ for every
infinite $A\subset\Zp$ and every non-trivial finite open cover $\mathcal U$.
This property implies full-scattering; systems with Blanchard's strong
property $P$ are full-Bernoulli \cite[Lemma~6.3]{HY}, and Blanchard's example
\cite{Blanchard} has strong property $P$ but is not strongly mixing
\cite[Example~A]{HY}.
Huang, Shao and Ye subsequently proved that full-Bernoulli systems are
precisely the topological $K$-systems, namely, the systems for which every
non-trivial finite open cover has positive topological entropy
\cite[Theorem~5.5 and Corollary~5.9]{HSY}.

Huang and Ye \cite[Question~1]{HY} posed the following three-part problem.

\begin{question}[Huang--Ye]\label{ques:HY}
\mbox{}\par
\begin{enumerate}
\item Does $\Delta^*$-mixing imply full-scattering?
\item Is the product of two full-scattering systems still full-scattering?
\item Is the product of two full-Bernoulli systems still full-Bernoulli?
\end{enumerate}
\end{question}

The third part of Question~\ref{ques:HY} was answered affirmatively in
\cite[Corollary~5.9]{HSY}, since the class of topological $K$-systems is
closed under products.  To the best of our knowledge, the first two parts
have remained open.  We answer the first negatively and the second
affirmatively.

\begin{theorem}\label{thm:counterexample}
There exists a $\Delta^*$-mixing system which is not full-scattering.
\end{theorem}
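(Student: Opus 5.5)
The plan is to build $X\subset\{0,1,2,3\}^{\Zp}$ as a subshift defined by a single "sparse coherence" constraint along a very lacunary set $A$. The constraint is chosen so that one fixed two-set cover has bounded complexity along $A$, while the constraint is too sparse to block $\Delta^*$-recurrence in $X\times X$.

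\textbf{Construction.} Fix $A=\{a_1<a_2<\cdots\}$ with $a_{m+1}\ge 4a_m+m$, or any sufficiently fast growth. Let $X$ be the set of all $x\in\{0,1,2,3\}^{\Zp}$ such that for every $t\in\Zp$ the set $\{x_{t+a_i}:i\ge1\}$ does not contain both $1$ and $2$. This is a closed, shift-invariant condition, since it forbids finitely many patterns at each pair of positions $t+a_i,t+a_j$. The symbols $0,3$ are unconstrained and supply the mixing. Every $x\in\{0,3\}^{\Zp}$ lies in $X$, and so does every point in which only the symbols $0,3,1$ occur; hence $T$ is onto and the cylinders $[1]$ and $[2]$ are nonempty.

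\textbf{Failure of full-scattering.} Take $\mathcal U=\{X\setminus[1],\,X\setminus[2]\}$. This cover is open, and it is non-trivial because $[1]$ and $[2]$ are nonempty open sets. An element of $\bigvee_{i\le n}T^{-a_i}\mathcal U$ is determined by a word $w\in\{1,2\}^n$ and consists of those $x$ with $x_{a_i}\ne w(i)$ for all $i$. By the defining constraint with $t=0$, each $x\in X$ avoids either $1$ or $2$ along $A$, so it lies in the element for $w=1^n$ or for $w=2^n$. Thus $c_A(\mathcal U,n)\le2$ for all $n$, and $X$ is not full-scattering.

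\textbf{$\Delta^*$-mixing.} It suffices to treat cylinder sets. Fix words $u,u',v,v'$ admissible in $X$ and put $U=[u]\times[u']$, $V=[v]\times[v']$. Using $0$'s as free filler, we can concatenate $u$, a block of $0$'s, and $v$ placed at offset $n$, and extend by $0$'s; the same applies in the second coordinate. Such a concatenation fails to lie in $X$ only if some position $p$ of $u$ carrying a symbol in $\{1,2\}$ and some position $q$ of $v$ carrying the opposite symbol satisfy $n+q-p\in D:=\{a_j-a_i:j>i\}$. Hence $N(U,V)$ contains $\Zp\setminus(F+D)$ for a finite set $F\subset\mathbb Z$ depending only on the lengths of the words. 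The same holds in each coordinate, so $N_{T\times T}(U,V)$ contains the complement of a finite union of translates of $D$.

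It therefore remains to show that no $\Delta$-set is contained in $F+D$ for finite $F$. Equivalently, for every infinite $S=\{s_1<s_2<\cdots\}$ some difference $s_j-s_i$ lies outside $F+D$.

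\textbf{Main obstacle: the combinatorial lemma.} This last step is the heart of the argument. Suppose all differences $s_j-s_i$ lie in $F+D$ and write $s_j-s_i=f_{ij}+a_{m(ij)}-a_{l(ij)}$. Colour each triple $i<j<k$ by the finite data consisting of $f_{ij},f_{jk},f_{ik}$ and the pattern of coincidences and orderings among the six indices $m(\cdot),l(\cdot)$, and apply Ramsey's theorem to pass to an infinite homogeneous subsequence. On this subsequence the identity
\[
 s_k-s_i=(s_k-s_j)+(s_j-s_i)
\]
becomes an equation of the form $a_{m}-a_{l}=a_{m'}-a_{l'}+a_{m''}-a_{l''}+c$ with $c$ bounded. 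For a lacunary $A$, such an equation forces the indices to cancel in pairs once they are large.

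I expect the case analysis to lead to a contradiction: either some index repeats in a way that homogeneity over infinitely many $k$ forbids, or the remaining identity is $a_m+a_{l''}+\cdots=\cdots$ with unmatched largest term, which lacunarity rules out. I would first try to force $m(ik)=m(jk)$ via the dominant term and derive $s_j-s_i\in F-F+(a_{l(jk)}-a_{l(ik)})$. Letting $k$ vary would then pin $s_j-s_i$ to a bounded set, which is impossible for infinitely many $j$. The growth condition on $A$ would be chosen exactly large enough for these dominance comparisons to absorb the bounded error $c$.
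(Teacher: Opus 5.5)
\textbf{There is a fatal gap: your system is not $\Delta^*$-mixing, and the combinatorial lemma you reduce to is false.}

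Your constraint, that $1$ and $2$ never both occur on a translate $t+A$, is a \emph{pairwise} condition. Two coordinates $p<p'$ lie on a common translate exactly when $p'-p\in D=\Delta(A)$. Impose the constraint for all $t\in\mathbb Z$, which you need anyway (see the last point below). Then any $x$ with $x_0=1$ and $x_n=2$, where $n=a_j-a_i$, violates it at $t=-a_i$. So $[1]\cap T^{-n}[2]=\varnothing$ for every $n\in D$.

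Hence $N([1],[2])$ misses $\Delta(A)$, which is by definition a $\Delta$-set. Since $N_{T\times T}([1]\times X,[2]\times X)=N([1],[2])$, the system is not even $\Delta^*$-transitive. No growth condition on $A$ changes this. In your reduction the failure appears as $0\in F$ (already for $u=1$, $v=2$), so $D\subset F+D$. Then $S=A$ has all its differences in $F+D$, and no Ramsey or lacunarity case analysis can yield a contradiction. Your proposed step of pinning $s_j-s_i$ to a bounded set simply returns $s_j-s_i=a_j-a_i$ in this case. Any constraint forbidding a pair of symbols at distances in $\Delta(A)$ has this defect.

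\textbf{The missing idea} is a forbidden configuration with at least three marked symbols. Then a violation in a glued point must use two positions from the same block, and those two positions pin down the translate.

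The paper does this as follows. It takes finite-support points of $\{0,1,2,3\}^{\mathbb Z}$ on which no translate $k+B$, with $B=\{4^m\}$, carries all four symbols. Since $0$ is automatically present, a violation needs $1,2,3$ on $k+B$. Two of these positions come from one block, and they determine $k$ by the uniqueness of representations $4^a-4^b$. The third position then forces $n\in c+B$, or $n$ into a finite set. So the bad times lie in finitely many translates of $B$ itself, not of $\Delta(B)$. Lemma~\ref{lem:sparse}, which rests on $4^r=4^p+4^q+c$ being impossible for large $r$, shows that such a set contains no $\Delta$-set. The cover $\{x_0\neq i\}$ then has complexity at most $4$ along $B$, exactly as in your bounded-complexity argument. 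Your framework would survive with the constraint ``not all of $1,2,3$ on $t+A$'' for a sufficiently lacunary $A$, together with the analogue of that lemma for translates of $A$.

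\textbf{Your one-sided definition with $t\in\Zp$ is also defective.} Take $a_1\ge1$ and the point with $1$ at $a_1-1$, $2$ at $a_2-1$, and $0$ elsewhere. It lies in $X$, because a violation would need the translate parameter $t=-1$. But $T^{-n}$ of the cylinder fixing these two coordinates is empty for every $n\ge1$, since there is a violation at $t=n-1$. So $T$ is not onto and $X$ is not transitive. The fact that many points lie in $X$ does not give surjectivity. You must impose the constraint for all $t\in\mathbb Z$, or work two-sided as the paper does.
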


\begin{theorem}\label{thm:main}
Every finite product of full-scattering systems is full-scattering.
\end{theorem}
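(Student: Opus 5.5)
We first reduce Theorem~\ref{thm:main} to the case of two factors. The product $X_1\times\cdots\times X_m$ equals $(X_1\times\cdots\times X_{m-1})\times X_m$, so induction on $m$ gives the general case once the case of $(X\times Y,T\times S)$ with both factors full-scattering is settled.

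\emph{Reduction to box covers.} Let $\mathcal U=\{U_1,\dots,U_k\}$ be a non-trivial finite open cover of $X\times Y$. Since no $U_i$ is dense, for each $i$ we choose nonempty open sets $W_i\subset X$ and $W_i'\subset Y$ with $\overline{W_i\times W_i'}\cap U_i=\varnothing$. Then
\[
U_i\subset \bigl((X\setminus\overline{W_i})\times Y\bigr)\cup\bigl(X\times(Y\setminus\overline{W_i'})\bigr).
\]
The point is that a lower bound for $c_A(\mathcal U,n)$ follows from a combinatorial statement. It suffices to produce finite sets $F_n\subset X\times Y$, of size tending to $\infty$, such that every element $\bigcap_{j\le n}(T\times S)^{-a_j}U_{s(j)}$ of the join contains at most a bounded number $r$ of points of $F_n$. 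This gives $c_A(\mathcal U,n)\ge |F_n|/r$. A point $(x,y)$ fails to lie in such a join element as soon as, for some $j$, we have $T^{a_j}x\in W_{s(j)}$ and $S^{a_j}y\in W'_{s(j)}$ simultaneously.

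\emph{Building separated families.} We would first extract, from the factor hypotheses, a "dichotomy'' form of full-scattering for a single system. Fix an infinite $A$ and nonempty open sets $W_1,\dots,W_k\subset X$. For every $N$ and every infinite $B\subset A$ we want $N$ points $x^1,\dots,x^N$ and an infinite $B'\subset B$ such that the visit patterns $\{j: T^{b_j}x^\ell\in W_i\}$ are sufficiently "spread'' along $B'$. This should be obtained by applying full-scattering to the two-element non-trivial covers $\{X\setminus\overline{V},X\setminus\overline{V'}\}$ with $V,V'\subset W_i$ disjoint. It should also use that full-scattering implies $\Delta^*$-transitivity and weak mixing (\cite[Corollary~6.5, Theorem~2.3]{HY}).

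We then run this successively in $X$ and in $Y$, passing to a common infinite subsequence via a Ramsey-type diagonal argument. Finally we take $F_n$ to be products $\{x^\ell\}\times\{y^{\ell'}\}$, arranged so that any candidate sequence $s(1),\dots,s(n)$ of cover indices is "killed'' at some time $j$ for all but boundedly many pairs $(\ell,\ell')$. Here "killed'' means that $T^{a_j}x^\ell\in W_{s(j)}$ and $S^{a_j}y^{\ell'}\in W'_{s(j)}$ hold at the same $j$. Because complexity along a subsequence of $A$ bounds that along $A$ from below (up to reindexing), passing to subsequences is harmless.

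\emph{Main obstacle.} The hard step is the simultaneity requirement in the last stage. Full-scattering of $X$ controls visits of $x$ to $W_i$ along $A$, and full-scattering of $Y$ controls visits of $y$ to $W_i'$, but the box $W_i\times W_i'$ requires both at the same index $a_j$. My first attempt would use the $\Delta^*$-mixing of each factor (available by \cite[Proposition~3.6]{HY}) to show the following. The set of indices at which $x$ enters $W_i$ is a "large'' set relative to $A$ after thinning, and $\Delta^*$-type largeness should be intersectable with the corresponding $Y$-side sets. If that does not suffice, the fallback is to strengthen the single-system lemma into a product statement proved directly by a Ramsey coloring of pairs of indices of $A$, colored by which cover elements are needed. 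One would then extract an infinite homogeneous subsequence on which the two factor estimates can be combined.
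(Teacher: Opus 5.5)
There is a genuine gap. Your reduction to two factors, the box reduction, the counting bound $c_A(\mathcal U,n)\ge |F_n|/r$ and the remark about subsequences are all correct. You have also located the heart of the matter: forcing a single pair $(x,y)$ into a box $W_i\times W_i'$ at the \emph{same} time $a_j$. But the proposal stops there. The ``spread'' visit patterns are never defined, and neither remedy is carried out.

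The $\Delta^*$ route cannot work as stated, for two reasons. First, $\Delta^*$-transitivity and $\Delta^*$-mixing concern return-time sets $N(U,V)$ of open sets, not the visit times of a fixed point $x$. Second, a $\Delta^*$-set need not meet $A$ at all. By Lemma~\ref{lem:sparse}, $\Zp\setminus\{4^m:m\geq1\}\in\Delta^*$, yet this set is disjoint from $A=\{4^m:m\geq1\}$. Indeed, the subshift of Theorem~\ref{thm:counterexample} is $\Delta^*$-mixing while its complexity along this $A$ stays at most $4$. So intersecting $\Delta^*$-sets tells you nothing along a prescribed sparse sequence.

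The Ramsey fallback is in the right spirit. However, you do not say which property of finite time sets is to be colored, nor why it occurs inside every infinite subsequence in each factor. That is where all the work lies.

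The paper supplies two ideas you are missing.

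\emph{Reduction to a finite task.} The paper argues by contradiction. If $c_A(\mathcal U,n)\le m$ for all $n$, compactness of the name space yields $m$ infinite names whose closed atoms cover $Z$. Pigeonhole then gives an infinite $B\subset A$ and sets $K_\ell=\bigcap_{b\in B}R^{-b}C_{c_\ell}$, $1\le\ell\le r$, $r\ge2$, covering $Z$. This replaces your growing families $F_n$ by one finite task. You need a single point $(x,y)$ and times $b_1,\ldots,b_r\in B$ with $R^{b_\ell}(x,y)\in U'_\ell\times V'_\ell\subset Z\setminus C_{c_\ell}$. Since weakly mixing factors have no isolated points, the $U'_\ell$ can be taken pairwise disjoint, and likewise the $V'_\ell$.

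\emph{Simultaneity via orderings.} Apply full-scattering to the cover $\{X\setminus F_i\}$, with $F_i\subset U'_i$ closed with nonempty interior. This shows that some point visits $U'_1,\ldots,U'_r$ at distinct times of any infinite $C$ (Lemma~\ref{lem:transversal}). Your two-set covers $\{X\setminus\overline V,X\setminus\overline{V'}\}$ are the case $r=2$ of this, but aimed at the wrong sets. The catch is that the visiting order $\rho$ is uncontrolled and need not agree between $X$ and $Y$.

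The paper removes this by running the same argument in the self-product $X^{S_r}$. There it uses the pairwise disjoint sets $W_j=\prod_{\sigma\in S_r}U'_{\sigma(j)}$, $1\le j\le r$. The self-product is full-scattering by Proposition~\ref{prop:self-products}, via the orbit-graph Lemma~\ref{lem:orbit-graph}. Consequently every infinite $C\subset B$ contains an $r$-set of times at which \emph{every} ordering is realized in $X$ (Lemma~\ref{lem:all-permutations}), and likewise in $Y$.

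Both families of good $r$-sets meet $[C]^r$ for every infinite $C$. The infinite Ramsey theorem (Lemma~\ref{lem:ramsey}) therefore produces a common $r$-set. Choosing the identity ordering in each factor yields $(x,y)$ outside every $K_\ell$, a contradiction.
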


The example in Theorem~\ref{thm:counterexample} is a four-symbol subshift.
It is defined by forbidding the simultaneous occurrence of all four symbols
on any translate of the sparse set $\{4^m:m\geq1\}$.  This immediately gives
bounded cover complexity along that set.  On the other hand, finite-support
points are dense, and two such points can be placed far apart except at a set
of displacements contained in finitely many translates of $\{4^m:m\geq1\}$.
Such a finite union contains no $\Delta$-set, which yields
$\Delta^*$-mixing.

\section{A \texorpdfstring{$\Delta^*$}{Delta-star}-mixing system which is not
full-scattering}

We begin with the elementary sparse-set fact used in the construction.  For
$D\subset\Zp$, put
\[
 \Delta(D)=\{d'-d:d,d'\in D,\ d<d'\}.
\]

\begin{lemma}\label{lem:sparse}
Let $B=\{4^m:m\geq1\}$.  If $c_1,\ldots,c_s\in\mathbb Z$ and $F\subset\Zp$
is finite, then
\[
 F\cup\bigcup_{j=1}^s(c_j+B)
\]
does not contain $\Delta(D)$ for any infinite $D\subset\Zp$.
\end{lemma}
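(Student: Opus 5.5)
The plan is to argue by contradiction, using only pigeonhole and the fact that a fixed nonzero integer is a difference of two powers of $4$ in only finitely many ways. Suppose $D\subset\Zp$ is infinite and $\Delta(D)\subset E:=F\cup\bigcup_{j=1}^s(c_j+B)$.

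\emph{Step 1: fixing $x$ and $y$.} The set
\[
 R=\{c_k-c_j:1\le j,k\le s\}
\]
is finite and $D$ is infinite, so I choose $x<y$ in $D$ with $L:=y-x\notin R$. Such a pair exists because, for fixed $x\in D$, only finitely many $y\in D$ give $y-x\in R$.

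\emph{Step 2: removing $F$ and fixing translates.} Consider the infinitely many $z\in D$ with $z>y$. Since $F$ is finite, only finitely many such $z$ have $z-y\in F$ or $z-x\in F$. Discarding these, every remaining $z$ has $z-y\in c_j+B$ and $z-x\in c_k+B$ for some pair $(j,k)$. There are only $s^2$ possible pairs, so by pigeonhole one fixed pair $(j,k)$ works for infinitely many $z$. For each such $z$ write
\[
 z-y=c_j+4^{b(z)},\qquad z-x=c_k+4^{e(z)}.
\]
Subtracting the two identities gives
\[
 4^{e(z)}-4^{b(z)}=L+c_j-c_k=:M.
\]

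\emph{Step 3: the contradiction.} By the choice in Step~1, $M\neq0$. Since $z$ ranges over infinitely many values and $z-y=c_j+4^{b(z)}$, the exponents $b(z)$ are unbounded. However, if $4^e-4^b=M\neq0$, then $e\ne b$. If $e>b$ then $4^b$ divides $M$, and if $e<b$ then $4^e$ divides $M$ while $4^b=4^e-M\le 4^e+|M|$. In either case $4^b\le 2|M|$, so $b$ is bounded by a constant depending only on $M$. This contradicts the unboundedness of $b(z)$, so no infinite $D$ has $\Delta(D)\subset E$.

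The only delicate point is Step~1. If the constant $M$ were allowed to be $0$, the relation $4^e=4^b$ would give no information. This is exactly why $x,y$ are chosen so that $y-x$ avoids the finitely many values $c_k-c_j$. Everything else is routine pigeonholing, and no Ramsey-type argument is needed.
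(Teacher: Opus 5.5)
Your proof is correct, but it takes a genuinely different route from the paper. The paper first thins $D$ so that no difference lands in $F$. It then colors each pair $d<d'$ by an index $j$ with $d'-d\in c_j+B$ and applies the infinite Ramsey theorem to get an infinite $D'$ with $\Delta(D')\subset c+B$ for a \emph{single} $c$. Three points $d_1<d_2<d_3$ of $D'$ then give $4^r=4^p+4^q+c$ with $r>p,q$ and $r$ arbitrarily large, contradicting $4^p+4^q+|c|\le 2\cdot4^{r-1}+|c|<4^r$.

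You avoid Ramsey entirely. You fix one pair $x<y$ whose gap $L$ avoids the finite set $R=\{c_k-c_j\}$ and let only the third point $z$ vary. Pigeonhole over the $s^2$ translate pairs then reduces everything to $4^{e}-4^{b}=M$ with $M\neq0$ fixed and $b$ unbounded. This fails because a fixed nonzero integer is a difference of two powers of $4$ in only finitely many ways, the same phenomenon behind \eqref{eq:unique-difference}, which the paper uses later in the gluing lemma.

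The trade-off is this: the Ramsey step gives the paper a uniform translate, so its contradiction is a single three-term identity. Your choice $L\notin R$ is exactly what lets you handle mixed translates without Ramsey.

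Your argument is also more general. It uses only that each nonzero integer has finitely many representations as a difference of two elements of $B$. So it works for any $B$ whose gaps tend to infinity, for example $\{2^m\}$ or the squares. The paper's closing inequality, by contrast, is tailored to the growth rate of the powers of $4$.
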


\begin{proof}
Suppose otherwise.  By passing recursively to an infinite subset of $D$, we
may assume that all of its positive differences exceed $\max F$ (with this
step omitted when $F$ is empty).  Thus $\Delta(D)\cap F=\varnothing$.  Color each pair
$d<d'$ in $D$ by an index $j$ for which $d'-d\in c_j+B$.  The infinite
Ramsey theorem \cite[Section~9.1]{Diestel} gives an infinite $D'\subset D$
and a fixed $c\in\{c_1,\ldots,c_s\}$
such that $\Delta(D')\subset c+B$.

Choose $d_1<d_2<d_3$ in $D'$.  For some $p,q,r\geq1$,
\[
 d_2-d_1=c+4^p,\qquad d_3-d_2=c+4^q,\qquad
 d_3-d_1=c+4^r.
\]
Consequently
\begin{equation}\label{eq:powers}
 4^r=4^p+4^q+c.
\end{equation}
The third difference is larger than each of the first two, so $r>p,q$.
Moreover, since $D'$ is infinite, $d_3-d_1$ and hence $r$ can be chosen
arbitrarily large.  Therefore
\[
 4^r=4^p+4^q+c
 \leq 2\cdot4^{r-1}+|c|<4^r,
\]
a contradiction.
\end{proof}

We use the two-sided full shift $\{0,1,2,3\}^{\mathbb Z}$ with the left shift
$\sigma$.  For $x\in\{0,1,2,3\}^{\mathbb Z}$, write
\[\operatorname{supp}(x)=\{n\in\mathbb Z:x_n\neq0\}.\]
  Let $Y$ be the set of
all finite-support points $x$ such that
\begin{equation}\label{eq:forbidden}
 \{x_{k+b}:b\in B\}\neq\{0,1,2,3\}
 \qquad\text{for every }k\in\mathbb Z,
\end{equation}
and set $X=\overline{Y}$.  Condition \eqref{eq:forbidden} is invariant under
$\sigma$ and $\sigma^{-1}$, so $\sigma(Y)=Y$.
Let $Z$ be the set of all points of the full shift satisfying
\eqref{eq:forbidden}.  If $x\notin Z$, then for some $k\in\mathbb Z$ and
$b_0,b_1,b_2,b_3\in B$ one has
\[
 x_{k+b_i}=i\qquad(0\leq i\leq3).
\]
The cylinder determined by these four equalities is contained in
$\{0,1,2,3\}^{\mathbb Z}\setminus Z$.  Hence $Z$ is closed.  Since
$Y\subset Z$, the space $X=\overline Y$ is a compact shift-invariant subspace
of $Z$, and $\sigma|_X$ is a homeomorphism.

\begin{lemma}\label{lem:gluing}
Let $(U_j,V_j)$, $1\leq j\leq s$, be finitely many pairs of nonempty open
subsets of $X$.  There is a set $E\in\Delta^*$ such that
\[
 E\subset\bigcap_{j=1}^sN(U_j,V_j).
\]
\end{lemma}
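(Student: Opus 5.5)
Since $Y$ is dense in $X$, we can pick finite-support points $u_j\in U_j\cap Y$ and $v_j\in V_j\cap Y$. We then shrink $U_j,V_j$ to cylinders around them: fix $L$ so that $[u_j]_{[-L,L]}\cap X\subset U_j$ and $[v_j]_{[-L,L]}\cap X\subset V_j$ for all $j$. We may also assume that $\operatorname{supp}(u_j),\operatorname{supp}(v_j)\subset[-L,L]$, enlarging $L$ if necessary. For $n\in\Zp$ consider the glued point
\[
 w=u_j+\sigma^{-n}v_j,
\]
which means $w_i=(u_j)_i$ for $i\in[-L,L]$, $w_{i}=(v_j)_{i-n}$ for $i\in n+[-L,L]$, and $w_i=0$ elsewhere. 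This is well defined once $n>2L$. Then $w\in U_j$ and $\sigma^n w\in V_j$, so $n\in N(U_j,V_j)$ as soon as $w\in Y$. The whole task therefore reduces to finding when $w$ satisfies \eqref{eq:forbidden}.

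Suppose $w\notin Y$. Then there are $k$ and $b_0,\dots,b_3\in B$ with $w_{k+b_i}=i$. Since $u_j$ and $\sigma^{-n}v_j$ each satisfy \eqref{eq:forbidden}, the three nonzero symbols $1,2,3$ cannot all come from one block. Hence among $b_1,b_2,b_3$ there is some $b$ with $k+b\in[-L,L]$ and some $b'$ with $k+b'\in n+[-L,L]$. The symbol $0$ is available everywhere, so it imposes no further constraint. We get $n=b'-b+e$ with $|e|\le 2L$ and $b,b'\in B$, $b'>b$. The key step is to show that such $n$ lie in finitely many translates of $B$ together with a finite set. For this we note that $b'-b\in\{4^{m'}-4^{m}\}$ is not a translate of $B$, so we must use a cleverer observation. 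If $b'$ is large and $b$ stays bounded (with $b\le 4^{M}$ for a fixed $M$), then $n\in c+B$ with $c$ ranging over the finite set $\{e-4^m: m\le M,\ |e|\le 2L\}$. If instead $b$ is also large, then the positions $k+b$ and $k+b'$ are separated by roughly $3\cdot4^{m'-1}$ or more. The two remaining symbols must each sit in one of the two windows, and that yields a third element $b''\in B$, distinct from $b$ or from $b'$, within distance $2L+$ of one of them. Two distinct elements of $B$ differ by at least $3\cdot 4^{\min}$, so this forces the smaller exponent to be bounded. Concretely, the three nonzero symbols occupy positions in two windows of width $2L+1$, so two of them, say $k+b,k+\tilde b$ with $b\ne\tilde b$, lie in the same window. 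That gives $|b-\tilde b|\le 2L$, which forces $\min(b,\tilde b)\le 2L$. Carrying this through both cases, if the pair sharing a window is in the left window, then $k\ge -L-2L$ is bounded below and also $k\le L$, so $k$ takes boundedly many values. Then $n\in k+B+[-L,L]$, a finite union of translates of $B$. If the pair is in the right window, then $k+\tilde b$ with $\tilde b\le 2L$ places $k$ near $n$, say $k=n-t$ with $|t|$ bounded. The left window element then gives $n-t+b\in[-L,L]$, which is impossible for $n>3L+$ large since $b\ge 4$. So that case contributes only a finite set.

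Thus the set of bad $n$ for pair $j$ is contained in $F_j\cup\bigcup(c+B)$ with finitely many $c$. Taking the union over $j$ gives a set $G=F\cup\bigcup_{i=1}^s(c_i+B)$. By Lemma~\ref{lem:sparse}, $G$ contains no $\Delta(D)$ for any infinite $D$, so $E=\Zp\setminus G$ meets every $\Delta$-set, that is, $E\in\Delta^*$. By construction $E\subset\bigcap_j N(U_j,V_j)$. I expect the main obstacle to be the combinatorial case analysis above, namely showing that the bad displacements really are confined to finitely many translates of $B$. The pigeonhole on the three nonzero symbols across two windows is the device I would rely on.
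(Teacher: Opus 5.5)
Your proof is correct and follows the paper's argument. You glue two finite-support points, note that the symbol $0$ is automatic so the three nonzero symbols split $2+1$ between the two blocks, and conclude that the bad times lie in finitely many translates $c+B$ (pair in the left block) or in a finite set (pair in the right block), after which Lemma~\ref{lem:sparse} finishes. The only difference is how you bound $k$ (resp.\ $k-n$): you use lacunarity of $B$, since two distinct elements of $B$ within distance $2L$ are both less than $3L$, whereas the paper uses unique representation of $4^a-4^b$. This works equally well, provided you also put $0\le n\le 2L$ into the finite exceptional set.
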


\begin{proof}
Since $Y$ is dense in $X$, choose $x^j\in U_j\cap Y$ and
$y^j\in V_j\cap Y$.  Set
\[
 A_j=\operatorname{supp}(x^j),\qquad
 D_j=\operatorname{supp}(y^j).
\]
Choose finite sets $L_j,M_j\subset\mathbb Z$ such that the relative
cylinders
\[
 [x^j]_{L_j}=\{u\in X:u_t=x^j_t\text{ for every }t\in L_j\}\subset U_j,
 \qquad
 [y^j]_{M_j}\subset V_j.
\]
Let $F_j^0$ be the set of $n\in\Zp$ for which at least one of the three sets
in
\begin{equation}\label{eq:exceptional}
 A_j\cap(n+D_j),\qquad L_j\cap(n+D_j),\qquad
 A_j\cap(n+M_j)
\end{equation}
is nonempty.  Since $A_j,D_j,L_j,M_j$ are finite, $F_j^0$ is finite.

For $n\in\Zp\setminus F_j^0$, define $z^{j,n}$ by
\begin{equation}\label{eq:z-definition}
 z^{j,n}_t=
 \begin{cases}
  x^j_t,&t\in A_j,\\
  y^j_{t-n},&t\in n+D_j,\\
  0,&t\notin A_j\cup(n+D_j).
\end{cases}
\end{equation}
Equivalently, $z^{j,n}$ consists of the nonzero block of $x^j$ on $A_j$
and the translated nonzero block of $y^j$ on $n+D_j$, with value $0$ at
all remaining coordinates.
The first set in \eqref{eq:exceptional} ensures that the first two cases in
\eqref{eq:z-definition} are disjoint.  The other two sets give
\begin{equation}\label{eq:cylinder-agreement}
 z^{j,n}|_{L_j}=x^j|_{L_j},\qquad
 (\sigma^nz^{j,n})|_{M_j}=y^j|_{M_j}.
\end{equation}
Consequently, if $z^{j,n}$ satisfies \eqref{eq:forbidden}, then
$z^{j,n}\in Y$ and
\begin{equation}\label{eq:return-from-z}
 z^{j,n}\in U_j,\qquad \sigma^nz^{j,n}\in V_j.
\end{equation}

Suppose that $z^{j,n}$ does not satisfy \eqref{eq:forbidden}.  There is then
$k\in\mathbb Z$ such that each of the symbols $0,1,2,3$ occurs on $k+B$.
Since $z^{j,n}$ has finite support and $k+B$ is infinite, the symbol $0$
occurs on $k+B$.  Hence there are $r_1,r_2,r_3\in k+B$ such that
\begin{equation}\label{eq:three-colors}
 z^{j,n}_{r_i}=i\qquad(1\leq i\leq3).
\end{equation}
The positions in \eqref{eq:three-colors} cannot all belong to $A_j$, since
$x^j$ satisfies \eqref{eq:forbidden}.  They cannot all belong to $n+D_j$
either, since the translated copy of $y^j$ also satisfies
\eqref{eq:forbidden}.  Thus two positions belong to one of $A_j,n+D_j$ and
the remaining position belongs to the other.

We use the following uniqueness property:
\begin{equation}\label{eq:unique-difference}
 4^a-4^b=4^{a'}-4^{b'}>0,\quad a>b,\quad a'>b'
 \quad\Longrightarrow\quad (a,b)=(a',b').
\end{equation}
Indeed, the largest power of $4$ dividing the common difference determines
$b$, after which $4^{a-b}-1$ determines $a-b$.

Assume first that two positions in \eqref{eq:three-colors} are
$u,v\in A_j$, where $u>v$.  Since $u,v\in k+B$, property
\eqref{eq:unique-difference} determines $\alpha>\beta$ from
\[
 u-v=4^\alpha-4^\beta
\]
and then determines $k=u-4^\alpha$.  Write the third position as $n+w$,
where $w\in D_j$.  For some $\gamma\geq1$,
\[
 n+w=k+4^\gamma,
\]
so $n=(k-w)+4^\gamma\in c+B$, with $c=k-w$.  The finite sets $A_j,D_j$
give only finitely many possible constants $c$.

Assume next that two positions in \eqref{eq:three-colors} are $n+u,n+v$,
where $u,v\in D_j$ and $u>v$.  Property
\eqref{eq:unique-difference} determines $\alpha>\beta$ from
\[
 u-v=4^\alpha-4^\beta,
\]
and $k=n+u-4^\alpha$.  If the third position is $w\in A_j$, then
$w=k+4^\gamma$ for some $\gamma\geq1$, and hence
\[
 n=(w-u+4^\alpha)-4^\gamma\in c-B.
\]
For each fixed $c$, the set $(c-B)\cap\Zp$ is finite.  Thus the bad times of
this second type form a finite set.

It follows that there are a finite set $F_j\subset\Zp$ and integers
$c_{j,1},\ldots,c_{j,t_j}$ such that
\begin{equation}\label{eq:bad-times}
 \{n\in\Zp\setminus F_j^0:z^{j,n}\notin Y\}
 \subset F_j\cup\bigcup_{\nu=1}^{t_j}(c_{j,\nu}+B).
\end{equation}
Enlarge $F_j$ to contain $F_j^0$, and let $H$ be the union of the right-hand
sides of \eqref{eq:bad-times} over $1\leq j\leq s$.  By
Lemma~\ref{lem:sparse}, $H$ contains no $\Delta$-set.  Therefore
$E=\Zp\setminus H$ belongs to $\Delta^*$, and
\eqref{eq:return-from-z} gives
$E\subset\bigcap_{j=1}^sN(U_j,V_j)$.
\end{proof}

\begin{proof}[Proof of Theorem~\ref{thm:counterexample}]
Taking two pairs in Lemma~\ref{lem:gluing} shows that for all nonempty open
$U_1,U_2,V_1,V_2\subset X$,
\[
 N(U_1,V_1)\cap N(U_2,V_2)\in\Delta^*.
\]
Here we used that $\Delta^*$ is upward hereditary.  Equivalently,
$(X\times X,\sigma\times\sigma)$ is
$\Delta^*$-transitive, so $(X,\sigma)$ is $\Delta^*$-mixing.

For $i\in\{0,1,2,3\}$, let
\[
 U_i=\{x\in X:x_0\neq i\}.
\]
Each $U_i$ is clopen and is not dense: the configuration taking the value
$i$ at the origin and $0$ elsewhere belongs to $X\setminus U_i$.  Thus
$\mathcal U=\{U_0,U_1,U_2,U_3\}$ is a non-trivial open cover of $X$.
For every $x\in X$, condition \eqref{eq:forbidden} with $k=0$ says that some
symbol $i$ does not occur at the coordinates in $B$.  Therefore
\[
 X=\bigcup_{i=0}^3\bigcap_{b\in B}\sigma^{-b}U_i.
\]
Writing $B=\{b_1<b_2<\cdots\}$, we obtain
\[
 N\left(\bigvee_{j=1}^n\sigma^{-b_j}\mathcal U\right)\leq4
 \qquad(n\geq1).
\]
Hence $(X,\sigma)$ is not full-scattering.
\end{proof}

\section{Finite self-products}

We first record the orbit-graph argument used later in the proof.

\begin{lemma}\label{lem:orbit-graph}
Let $(X,T)$ be a weakly mixing system.  Given $r\geq1$, let
$\mathcal U=\{U_1,\ldots,U_m\}$ be a non-trivial finite open cover of $X^r$.
There are $k_1,\ldots,k_r\in\Zp$ such that the equivariant map
\[
 \Phi\colon X\longrightarrow X^r,
 \qquad
 \Phi(x)=(T^{k_1}x,\ldots,T^{k_r}x),
\]
has the property that $\Phi^{-1}\mathcal U$ is a non-trivial open cover of
$X$.
\end{lemma}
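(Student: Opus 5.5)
Since each $U_i$ is not dense in $X^r$, we first fix, for every $i\in\{1,\ldots,m\}$, a nonempty open set $W_i\subset X^r$ with $W_i\cap U_i=\varnothing$. Shrinking $W_i$, we may take it to be a product box
\[
W_i=W_{i,1}\times\cdots\times W_{i,r},
\]
where the $W_{i,\ell}\subset X$ are nonempty and open. Because $\Phi^{-1}\mathcal U$ obviously covers $X$, it is non-trivial exactly when, for each $i$, the open set $\Phi^{-1}(U_i)$ is not dense in $X$. It suffices to find $k_1,\ldots,k_r$ such that $\Phi^{-1}(W_i)\neq\varnothing$ for every $i$. Indeed, $\Phi^{-1}(W_i)$ is a nonempty open subset of $X$ (it is open since $\Phi$ is continuous), and it is disjoint from $\Phi^{-1}(U_i)$, so $\Phi^{-1}(U_i)$ is not dense.

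Now $\Phi^{-1}(W_i)=\bigcap_{\ell=1}^r T^{-k_\ell}W_{i,\ell}$. We therefore need integers $k_1,\ldots,k_r$ such that for all $i$ the sets $T^{-k_1}W_{i,1},\ldots,T^{-k_r}W_{i,r}$ have a common point. We choose the $k_\ell$ recursively, using weak mixing of all finite orders: $(X^N,T^{(N)})$ is transitive for every $N$, by Furstenberg's theorem that weak mixing implies weak mixing of every finite product.

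Take $k_1=0$. Suppose $k_1,\ldots,k_{\ell-1}$ have been chosen so that each set
\[
O_i^{(\ell-1)}=\bigcap_{p<\ell}T^{-k_p}W_{i,p}
\]
is nonempty, for every $i$; this holds trivially when $\ell=2$. We want $k_\ell$ with
\[
O_i^{(\ell-1)}\cap T^{-k_\ell}W_{i,\ell}\neq\varnothing\quad\text{for all } i,
\]
that is,
\[
k_\ell\in\bigcap_{i=1}^m N\bigl(O_i^{(\ell-1)},W_{i,\ell}\bigr).
\]
By weak mixing of $T$, applied to the product of $m$ copies, this intersection is nonempty (indeed infinite). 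Since $k_\ell\in\Zp$ is all we need, any element works. After $r$ steps every $\Phi^{-1}(W_i)$ is nonempty, which proves the lemma.

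The main step is the simultaneous choice of a return time for all $m$ pairs. This is exactly the standard fact that weak mixing gives $\bigcap_i N(A_i,B_i)\neq\varnothing$ for finitely many pairs of nonempty open sets. It is the only place where weak mixing is used. Everything else is bookkeeping with product neighbourhoods.
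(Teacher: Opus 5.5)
Your proposal is correct and follows essentially the same route as the paper: pick product boxes disjoint from each $U_i$, set $k_1=0$, and choose each subsequent $k_\ell$ recursively by applying transitivity of the $m$-fold product $T^{(m)}$ to $\prod_i O_i^{(\ell-1)}$ and $\prod_i W_{i,\ell}$. The non-density of each $\Phi^{-1}(U_i)$ then follows because it misses the nonempty open set $\Phi^{-1}(W_i)$, exactly as in the paper.
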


\begin{proof}
For every $j\in\{1,\ldots,m\}$, choose nonempty open sets
$O_{j,s}\subset X$, $1\leq s\leq r$, such that
\[
 O_{j,1}\times\cdots\times O_{j,r}
 \subset X^r\setminus\overline{U_j}.
\]
Set $k_1=0$.  Suppose that $k_1,\ldots,k_{s-1}$ have been chosen so that
\[
 V_j^{s-1}=\bigcap_{q=1}^{s-1}T^{-k_q}O_{j,q}
\]
is nonempty and open for every $j$.  Weak mixing implies transitivity of every
finite self-product.  Hence there exists $k_s\in\Zp$ such that
\[
 (T^{(m)})^{k_s}\left(\prod_{j=1}^mV_j^{s-1}\right)
 \cap\prod_{j=1}^mO_{j,s}\neq\varnothing.
\]
It follows that
$V_j^s=V_j^{s-1}\cap T^{-k_s}O_{j,s}$ is nonempty for every $j$.
Continuing inductively gives the required $k_1,\ldots,k_r$.
Indeed,
\[
 V_j^r\subset
 \Phi^{-1}(O_{j,1}\times\cdots\times O_{j,r})
 \subset X\setminus\Phi^{-1}(U_j),
\]
so no member of $\Phi^{-1}\mathcal U$ is dense.  Equivariance follows from
the definition of $\Phi$.
\end{proof}

\begin{proposition}\label{prop:self-products}
Every finite self-product of a full-scattering system is full-scattering.
\end{proposition}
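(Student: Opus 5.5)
Let $(X,T)$ be full-scattering and fix $r\geq1$. I will show that $(X^r,T^{(r)})$ is full-scattering by pulling covers back along the map of Lemma~\ref{lem:orbit-graph}. Full-scattering implies weak mixing \cite[Theorem~2.3]{HY}, so that lemma applies.

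Let $\mathcal U=\{U_1,\ldots,U_m\}$ be a non-trivial finite open cover of $X^r$, and let $A=\{a_1<a_2<\cdots\}\subset\Zp$ be infinite. Lemma~\ref{lem:orbit-graph} gives $k_1,\ldots,k_r$ and the continuous equivariant map $\Phi(x)=(T^{k_1}x,\ldots,T^{k_r}x)$, with $\Phi\circ T=T^{(r)}\circ\Phi$, such that
\[
 \mathcal V=\Phi^{-1}\mathcal U=\{\Phi^{-1}U_1,\ldots,\Phi^{-1}U_m\}
\]
is a non-trivial finite open cover of $X$. It is a cover because $\Phi$ is defined on all of $X$ and $\mathcal U$ covers $X^r$.

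The key comparison is
\[
 c_A(\mathcal V,n)\leq c_A(\mathcal U,n)\qquad(n\geq1).
\]
By equivariance, $\Phi^{-1}\bigl((T^{(r)})^{-a_i}U_j\bigr)=T^{-a_i}\Phi^{-1}U_j$. Hence
\[
 \Phi^{-1}\Bigl(\bigvee_{i=1}^n(T^{(r)})^{-a_i}\mathcal U\Bigr)=\bigvee_{i=1}^nT^{-a_i}\mathcal V.
\]
Take a subcover $\mathcal W$ of $\bigvee_{i=1}^n(T^{(r)})^{-a_i}\mathcal U$ of minimal cardinality. Its preimage $\Phi^{-1}\mathcal W$ covers $X$, and its members are elements of $\bigvee_{i=1}^n T^{-a_i}\mathcal V$, namely the preimages of the corresponding joins. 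So $\Phi^{-1}\mathcal W$ is a subcover of cardinality at most $|\mathcal W|$. This argument does not require $\Phi$ to be surjective: preimages of a cover of $X^r$ always cover $X$.

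Since $(X,T)$ is full-scattering and $\mathcal V$ is non-trivial, $c_A(\mathcal V,n)\to\infty$. The comparison then gives $c_A(\mathcal U,n)\to\infty$. As $A$ and $\mathcal U$ were arbitrary, $X^r$ is full-scattering.

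One detail needs care: the definition assumes a system has a continuous surjection. $T^{(r)}$ is surjective because $T$ is, so $(X^r,T^{(r)})$ is a genuine system. The substantive difficulty is guaranteeing that the pulled-back cover stays non-trivial, and Lemma~\ref{lem:orbit-graph} already settles this. With that lemma in hand, the rest is the bookkeeping above.
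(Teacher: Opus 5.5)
Your proposal is correct and follows the paper's proof essentially verbatim: weak mixing via \cite[Theorem~2.3]{HY}, the map $\Phi$ from Lemma~\ref{lem:orbit-graph}, the equivariance identity for joins, and the observation that a subcover pulls back to a subcover, giving $c_A(\Phi^{-1}\mathcal U,n)\leq c_A(\mathcal U,n)$. Your extra remarks, that $\Phi$ need not be surjective and that $T^{(r)}$ is surjective, are accurate details the paper leaves implicit.
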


\begin{proof}
Let $(X,T)$ be full-scattering.  It is weakly mixing
\cite[Theorem~2.3]{HY}.  Fix $r\geq1$, an infinite
$A=\{a_1<a_2<\cdots\}\subset\Zp$, and a non-trivial finite open cover
$\mathcal U$ of $X^r$.  Let $\Phi$ be given by
Lemma~\ref{lem:orbit-graph}.  Equivariance gives
\[
 \bigvee_{i=1}^nT^{-a_i}(\Phi^{-1}\mathcal U)
 =\Phi^{-1}\left(
 \bigvee_{i=1}^n(T^{(r)})^{-a_i}\mathcal U\right).
\]
A subcover before pullback gives a subcover after pullback.  Therefore
\[
 N\left(\bigvee_{i=1}^n(T^{(r)})^{-a_i}\mathcal U\right)
 \geq
 N\left(\bigvee_{i=1}^nT^{-a_i}(\Phi^{-1}\mathcal U)\right).
\]
The right-hand side tends to infinity because
$\Phi^{-1}\mathcal U$ is non-trivial.
\end{proof}

\section{Permutation and finite-intersection lemmas}

For $r\geq1$, let $S_r$ denote the symmetric group of all permutations of
$\{1,\ldots,r\}$.  For an infinite set $B$, let $[B]^r$ denote the family of
all $r$-element subsets of $B$.

\begin{lemma}\label{lem:transversal}
Let $(X,T)$ be full-scattering, let $U_1,\ldots,U_r$ be pairwise disjoint
nonempty open sets, where $r\geq2$, and let $B\subset\Zp$ be infinite.
Then there are distinct times $b_1,\ldots,b_r\in B$, a point $x\in X$, and
a permutation $\rho\in S_r$ such that
\[
 T^{b_j}x\in U_{\rho(j)}
 \qquad (1\leq j\leq r).
\]
\end{lemma}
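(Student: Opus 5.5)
The plan is to apply the definition of full-scattering directly to a cover built from the complements of (shrunken) copies of the sets $U_i$. First shrink the sets: for each $i$ choose a nonempty open set $U_i'$ with $\overline{U_i'}\subset U_i$, which is possible in a compact metric space. Since the $U_i$ are pairwise disjoint, the closures $\overline{U_1'},\ldots,\overline{U_r'}$ are pairwise disjoint as well. Put
\[
 W_i=X\setminus\overline{U_i'}\qquad(1\leq i\leq r),\qquad \mathcal W=\{W_1,\ldots,W_r\}.
\]
Each $W_i$ is open, and $\bigcup_i W_i=X\setminus\bigcap_i\overline{U_i'}=X$ because $r\geq2$ and the closures are disjoint. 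So $\mathcal W$ is a finite open cover. It is non-trivial, since $W_i$ misses the nonempty open set $U_i'$ and hence is not dense.

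Write $B=\{b'_1<b'_2<\cdots\}$. We argue by contradiction and suppose that no point $x$ has the following property: for every $i$ there is some $b\in B$ with $T^bx\in U_i'$. Then every $x\in X$ misses some $U_i'$ along all of $B$. In other words,
\[
 X=\bigcup_{i=1}^r\bigcap_{b\in B}T^{-b}W_i .
\]
Each of the $r$ sets $\bigcap_{\ell\leq n}T^{-b'_\ell}W_i$ is an element of $\bigvee_{\ell=1}^nT^{-b'_\ell}\mathcal W$, and these $r$ elements cover $X$. Hence $c_B(\mathcal W,n)\leq r$ for all $n$. This contradicts full-scattering, exactly as in the proof of Theorem~\ref{thm:counterexample}.

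So there are $x\in X$ and times $t_1,\ldots,t_r\in B$ with $T^{t_i}x\in U_i'\subset U_i$. The times $t_i$ are automatically distinct: if $t_i=t_{i'}$ with $i\neq i'$, then $T^{t_i}x$ would lie in $U_i\cap U_{i'}=\varnothing$. List them in increasing order as $b_1<\cdots<b_r$ and define $\rho\in S_r$ by $b_j=t_{\rho(j)}$. Then $T^{b_j}x\in U_{\rho(j)}$ for each $j$, which is the statement of the lemma.

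There is no real obstacle in this argument. The only points needing care are shrinking the sets so that the complements of the closures form an open cover, and noting that disjointness of the $U_i$ is what makes the times distinct and the permutation well defined.
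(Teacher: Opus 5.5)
Your proof is correct and follows essentially the same route as the paper: your sets $\overline{U_i'}$ play the role of the paper's closed sets $F_i\subset U_i$ with nonempty interior, and the contradiction comes from the $r$ constant-name atoms of the complementary cover bounding $c_B(\mathcal W,n)$ by $r$. You are slightly more explicit than the paper in two places: you check that the complements cover $X$ (using $r\geq2$ and disjointness), and you construct $\rho$ from the ordered entry times.
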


\begin{proof}
Choose closed sets $F_i\subset U_i$ with nonempty interiors.  If the
conclusion failed, then for every $x\in X$ there would be an index $i$ such
that $T^b x\notin F_i$ for all $b\in B$.  Otherwise one could choose, for
each $i$, a time at which the orbit of $x$ enters $F_i$; those times would be
distinct because the $F_i$ are disjoint.  Thus
\[
 X=\bigcup_{i=1}^r\bigcap_{b\in B}T^{-b}(X\setminus F_i).
\]
The family $\mathcal A=\{X\setminus F_i:1\leq i\leq r\}$ is a non-trivial
open cover.  Writing $B=\{b_1<b_2<\cdots\}$, the $r$ constant-name atoms
\[
 \bigcap_{j=1}^nT^{-b_j}(X\setminus F_i),
 \qquad 1\leq i\leq r,
\]
cover $X$ for every $n$.  Hence
\[
 N\left(\bigvee_{j=1}^nT^{-b_j}\mathcal A\right)\leq r
\]
for all $n$, contradicting full-scattering.
\end{proof}

\begin{lemma}\label{lem:all-permutations}
Let $(X,T)$ be full-scattering, let $U_1,\ldots,U_r$ be pairwise disjoint
nonempty open sets, and let $B\subset\Zp$ be infinite.  For every infinite
$C\subset B$ there is
$F=\{b_1<\cdots<b_r\}\in[C]^r$ such that, for every $\tau\in S_r$, some
$x_\tau\in X$ satisfies
\[
 T^{b_j}x_\tau\in U_{\tau(j)}
 \qquad (1\leq j\leq r).
\]
\end{lemma}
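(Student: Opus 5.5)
The plan is to pass to a finite self-product, where a single application of Lemma~\ref{lem:transversal} realizes all permutations at once. No Ramsey-type argument should be needed. Index the coordinates of the product $X^{S_r}$ by the permutations $\tau\in S_r$, and let $T^{(S_r)}$ act diagonally. By Proposition~\ref{prop:self-products}, $(X^{S_r},T^{(S_r)})$ is full-scattering. (For $r=1$ the statement is immediate from the transitivity of $T$, so assume $r\ge 2$.) For $1\le i\le r$ define the open sets
\[
 W_i=\prod_{\tau\in S_r}U_{\tau(i)}\subset X^{S_r}.
\]
Each $W_i$ is nonempty. They are pairwise disjoint: if $i\neq i'$, then already at the coordinate $\tau=\mathrm{id}$ the factors $U_i$ and $U_{i'}$ are disjoint.

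Apply Lemma~\ref{lem:transversal} to the product system, the sets $W_1,\dots,W_r$, and the infinite set $C$ (which is allowed, since the lemma holds for any infinite subset of $\Zp$). This gives distinct times $c_1,\dots,c_r\in C$, a point $(x_\tau)_{\tau\in S_r}$ and a permutation $\rho\in S_r$ with $(T^{(S_r)})^{c_j}(x_\tau)_\tau\in W_{\rho(j)}$. Coordinatewise this says
\[
 T^{c_j}x_\tau\in U_{\tau(\rho(j))}\qquad(1\le j\le r,\ \tau\in S_r).
\]
Now sort the times. Let $\pi\in S_r$ be such that $b_j=c_{\pi(j)}$ satisfies $b_1<\cdots<b_r$, and put $F=\{b_1<\cdots<b_r\}\in[C]^r$. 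Then
\[
 T^{b_j}x_\tau\in U_{\tau\rho\pi(j)}\qquad\text{for all } j \text{ and } \tau.
\]

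Finally, since $\tau\mapsto\tau\rho\pi$ is a bijection of $S_r$, every $\sigma\in S_r$ arises as $\tau\rho\pi$ for $\tau=\sigma\pi^{-1}\rho^{-1}$. For that $\sigma$, the point $x_\sigma':=x_{\sigma\pi^{-1}\rho^{-1}}$ satisfies $T^{b_j}x'_\sigma\in U_{\sigma(j)}$ for all $j$, which is exactly the required conclusion.

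The only point needing care is the first step. It relies on the self-product being full-scattering, which is Proposition~\ref{prop:self-products}, and on the sets $W_i$ being pairwise disjoint, which holds because distinct indices $i\neq i'$ give disjoint factors at every coordinate. Everything after that is bookkeeping with the permutations.
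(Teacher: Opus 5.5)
Your proposal is correct and follows the paper's proof essentially verbatim: the product $X^{S_r}$, the sets $W_i=\prod_{\tau}U_{\tau(i)}$, Lemma~\ref{lem:transversal} applied along $C$, and then reading off the appropriate coordinate; your explicit sorting of the times $c_j$ (absorbing the relabeling into $\rho\pi$) just makes precise a step the paper leaves implicit. One small correction: for $r=1$ the reason is surjectivity of $T^{b_1}$, so any $b_1\in C$ works, not transitivity, which by itself would not place the time inside $C$.
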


\begin{proof}
For $r=1$, choose any $b_1\in C$ and use surjectivity of $T^{b_1}$.
Suppose $r\geq2$.  By Proposition~\ref{prop:self-products}, the product
 $X^{S_r}$, with one coordinate for each $\sigma\in S_r$, is
full-scattering.  For $1\leq j\leq r$, put
\[
 W_j=\prod_{\sigma\in S_r}U_{\sigma(j)}\subset X^{S_r}.
\]
The sets $W_1,\ldots,W_r$ are pairwise disjoint and nonempty.  Apply
Lemma~\ref{lem:transversal} in $X^{S_r}$ along $C$.  We obtain
$F=\{b_1<\cdots<b_r\}\in[C]^r$, a point
$z=(x_\sigma)_{\sigma\in S_r}$, and $\rho\in S_r$ such that
\[
 (T^{S_r})^{b_j}z\in W_{\rho(j)}
 \qquad(1\leq j\leq r).
\]
Given $\tau\in S_r$, take the coordinate indexed by
 $\sigma=\tau\circ\rho^{-1}$.  At time $b_j$, this coordinate belongs to
\[
 U_{\sigma(\rho(j))}=U_{\tau(j)},
\]
which proves the assertion.
\end{proof}

\section{Proof of the product theorem}

We first note a standard consequence of weak mixing.

\begin{lemma}\label{lem:no-isolated}
A non-singleton weakly mixing compact metric system has no isolated points.
\end{lemma}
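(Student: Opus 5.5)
We argue by contradiction: suppose $(X,T)$ is weakly mixing, $X$ has at least two points, and some $p\in X$ is isolated. The plan is to show that the weak mixing hypothesis forces the singleton $\{p\}$ to be hit by every other nonempty open set at a common time, which is impossible for disjoint sets.

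First we pick the relevant open sets. Since $X$ is not a singleton, choose a point $q\neq p$. By Hausdorffness, choose an open set $V\ni q$ with $p\notin V$. The set $\{p\}$ is open because $p$ is isolated.

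Next we apply weak mixing, meaning that $(X\times X,T\times T)$ is transitive. Applied to the product open sets $\{p\}\times V$ and $\{p\}\times\{p\}$, it yields some $n\in\Zp$ with
\[
 \{p\}\cap T^{-n}\{p\}\neq\varnothing\quad\text{and}\quad V\cap T^{-n}\{p\}\neq\varnothing.
\]
The first condition gives $T^np=p$. The second gives a point $v\in V$ with $T^nv=p$. Then $v\ne p$, since $p\notin V$.

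This does not yet contradict anything, because $T$ need not be injective. We therefore replace the pair of sets so that a genuine contradiction appears. We use the product open sets $\{p\}\times\{p\}$ and $\{p\}\times V$. Transitivity gives $n$ with $T^np=p$ and $T^np\in V$. Hence $p\in V$, which is a contradiction.

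The only point to check is that transitivity of $X\times X$ gives a common time $n$ for the pair of coordinate conditions. This is exactly the definition of transitivity on the product, since $N(\{p\}\times\{p\},\{p\}\times V)=N(\{p\},\{p\})\cap N(\{p\},V)$. I expect no real obstacle here.

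One possible degenerate case is that $\{p\}$ fails to be a nonempty open set in $X\times X$'s factors. This is ruled out because $p$ is isolated and $p\in X$.
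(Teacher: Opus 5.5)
Your proof is correct and is essentially the paper's argument: $T\times T$ keeps the orbit of $(p,p)$ on the diagonal, while transitivity of $X\times X$ forces it into an open set off the diagonal. You use the explicit rectangle $\{p\}\times V$ with $p\notin V$, whereas the paper argues through density of the orbit and closedness of the diagonal; the paragraph with the first, dead-end choice of open sets can simply be deleted.
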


\begin{proof}
If $x$ were isolated, then $\{(x,x)\}$ would be open in $X^2$.
For every nonempty open $W\subset X^2$, transitivity applied to
$\{(x,x)\}$ and $W$ would give an $n\in\Zp$ such that
$(T\times T)^n(x,x)\in W$.  Hence the orbit of $(x,x)$ would be dense in
$X^2$.  This orbit is contained in the closed diagonal, so $X^2$ would equal
its diagonal and $X$ would be a singleton.
\end{proof}
The following direct consequence of the infinite Ramsey theorem
\cite[Section~9.1]{Diestel} will be used in the proof of
Theorem~\ref{thm:main}.

\begin{lemma}\label{lem:ramsey}
Fix $r\geq1$ and an infinite set $B$.  Suppose
$\mathcal G_1,\ldots,\mathcal G_s\subset[B]^r$ have the property that
\[
 \mathcal G_i\cap[C]^r\neq\varnothing
\]
for every infinite $C\subset B$ and every $i$.  Then
$\bigcap_{i=1}^s\mathcal G_i$ has the same property.
\end{lemma}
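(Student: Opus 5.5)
The plan is to induct on $s$, with the case $s=2$ carrying all the content, and to combine the infinite Ramsey theorem \cite[Section~9.1]{Diestel} with the hypothesis on $\mathcal G_1$.

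For $s=2$, colour each $F\in[B]^r$ by whether it lies in $\mathcal G_1$. Ramsey's theorem for $r$-element subsets gives an infinite $C_0\subset B$ with $[C_0]^r$ monochromatic. The colour cannot be ``not in $\mathcal G_1$'', because $\mathcal G_1$ meets $[C_0]^r$ by hypothesis. Hence $[C_0]^r\subset\mathcal G_1$.

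This must hold inside every infinite $C\subset B$, so I apply the same argument to $C$ in place of $B$. The hypothesis on $\mathcal G_1$ restricts to infinite subsets of $C$, so we obtain an infinite $C_0\subset C$ with $[C_0]^r\subset\mathcal G_1$. The hypothesis on $\mathcal G_2$, applied to $C_0$, then yields
\[
 F\in\mathcal G_2\cap[C_0]^r\subset\mathcal G_1\cap\mathcal G_2\cap[C]^r .
\]

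For general $s$, iterate. Given an infinite $C$, pass successively to infinite subsets $C\supset C_1\supset\cdots\supset C_{s-1}$ with $[C_i]^r\subset\mathcal G_i$. Then pick an element of $\mathcal G_s\cap[C_{s-1}]^r$, which lies in every $\mathcal G_i$ and in $[C]^r$. Equivalently, one can induct, since $\mathcal G_1\cap\mathcal G_2$ again has the property.

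There is no real obstacle here. The only point to check is that the Ramsey step is applied within an arbitrary infinite $C$ and not just within $B$. This is legitimate because the hypothesis is hereditary to infinite subsets.
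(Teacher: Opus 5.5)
Your proof is correct and follows the paper's approach. The paper states this lemma as a direct consequence of the infinite Ramsey theorem and writes out no details; your argument supplies exactly that step. Colouring $[C]^r$ by membership in $\mathcal G_i$ forces any infinite homogeneous subset to satisfy $[C_i]^r\subset\mathcal G_i$, and nesting these subsets inside an arbitrary infinite $C\subset B$ takes care of all $s$ families.
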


\begin{proof}[Proof of Theorem~\ref{thm:main}]
It suffices to prove closure under products of two systems.  Let $(X,T)$ and
$(Y,S)$ be full-scattering, and put $Z=X\times Y$ and $R=T\times S$.
The conclusion is immediate if one of the factors is a singleton.  Otherwise
both systems are weakly mixing \cite[Theorem~2.3]{HY} and, by
Lemma~\ref{lem:no-isolated}, neither space has isolated points.

Suppose that $Z$ is not full-scattering.  Since the complexity functions are
nondecreasing, there are an infinite set
$A=\{a_1<a_2<\cdots\}\subset\Zp$, a non-trivial finite open cover
$\mathcal U=\{U_1,\ldots,U_k\}$ of $Z$, and $m\geq1$ such that
\begin{equation}\label{eq:bounded}
 N\left(\bigvee_{t=1}^nR^{-a_t}\mathcal U\right)\leq m
 \qquad(n\geq1).
\end{equation}
Let $C_i=\overline{U_i}$.  Then $\{C_1,\ldots,C_k\}$ is a closed cover of
$Z$ by proper sets, and \eqref{eq:bounded} remains valid for the corresponding
closed joins.

We claim that there are names
$\omega^1,\ldots,\omega^m\in\{1,\ldots,k\}^{\N}$ such that
\begin{equation}\label{eq:infinite-atoms}
 Z=\bigcup_{q=1}^m\bigcap_{t\geq1}R^{-a_t}C_{\omega_t^q}.
\end{equation}
Indeed, for each $n$, let $H_n$ be the set of $m$-tuples of infinite names
whose length-$n$ closed atoms cover $Z$.  By \eqref{eq:bounded}, $H_n$ is
nonempty: a cover by at most $m$ length-$n$ atoms can be padded by repeated
atoms and each finite name can be extended arbitrarily.  Membership in $H_n$
depends only on the first $n$ coordinates of the names, so $H_n$ is a closed
subset of the compact space
$(\{1,\ldots,k\}^{\N})^m$.  Moreover, $H_{n+1}\subset H_n$.  A point of
$\bigcap_nH_n$ exists by compactness.  It gives
\eqref{eq:infinite-atoms}: if $z\in Z$, then for each $n$ at least one of the
$m$ length-$n$ atoms contains $z$.  The corresponding nonempty subsets of
$\{1,\ldots,m\}$ decrease with $n$, so one index $q$ works for every $n$.

There are only finitely many vectors
$(\omega_t^1,\ldots,\omega_t^m)$.  Hence there is an infinite set of indices
$I\subset\N$ on which this vector is constant.  Put
$B=\{a_t:t\in I\}$.  If $c_1,\ldots,c_r$ are the distinct coordinates of
the constant vector, then \eqref{eq:infinite-atoms} implies
\begin{equation}\label{eq:constant-atoms}
 Z=\bigcup_{\ell=1}^rK_\ell,
 \qquad
 K_\ell=\bigcap_{b\in B}R^{-b}C_{c_\ell}.
\end{equation}
We have $r\geq2$.  Otherwise $Z=K_1$; for any $b\in B$ this gives
$R^b(Z)\subset C_{c_1}$, and surjectivity of $R^b$ contradicts the
properness of $C_{c_1}$.

For each $\ell$, choose a nonempty open rectangle
$U_\ell\times V_\ell\subset Z\setminus C_{c_\ell}$.  Since $X$ and $Y$
have no isolated points, there are pairwise disjoint nonempty open sets
\[
 U'_\ell\subset U_\ell
 \quad\text{and}\quad
 V'_\ell\subset V_\ell,
 \qquad 1\leq\ell\leq r.
\]
To see this, choose distinct points successively in the prescribed open sets
and then take sufficiently small pairwise disjoint neighborhoods.

Let $\mathcal G_X\subset[B]^r$ consist of those
$F=\{b_1<\cdots<b_r\}$ for which every permutation of
$U'_1,\ldots,U'_r$ is realized at the times $b_1,\ldots,b_r$ by some point
of $X$.  Define $\mathcal G_Y$ analogously using $V'_1,\ldots,V'_r$.
Lemma~\ref{lem:all-permutations} says that each of these families meets
$[C]^r$ for every infinite $C\subset B$.  By Lemma~\ref{lem:ramsey}, choose
\[
 F=\{b_1<\cdots<b_r\}\in\mathcal G_X\cap\mathcal G_Y.
\]
Taking the identity permutation in each factor, there are $x\in X$ and
 $y\in Y$ such that
\[
 T^{b_\ell}x\in U'_\ell
 \quad\text{and}\quad
 S^{b_\ell}y\in V'_\ell
 \qquad(1\leq\ell\leq r).
\]
Consequently,
\[
 R^{b_\ell}(x,y)\in U'_\ell\times V'_\ell
 \subset Z\setminus C_{c_\ell},
\]
so $(x,y)\notin K_\ell$ for every $\ell$.  This contradicts
\eqref{eq:constant-atoms}.  Therefore $X\times Y$ is full-scattering.
Induction completes the proof for finite products.
\end{proof}

\begin{remark}
The proof uses only finite self-products and the ordinary infinite Ramsey
theorem.  The symmetric-group self-product is the step that makes every
ordering available on a single finite time set; the Ramsey argument then
selects one such set that works in both factors.
\end{remark}


\begin{thebibliography}{99}

\bibitem{DA}
D. Ahmadi Dastjerdi and M. Dabbaghian Amiri,
\emph{Types of mixings and transitivities in topological dynamics},
arXiv:1710.01932, 2017,
\url{https://arxiv.org/abs/1710.01932}.

\bibitem{Blanchard}
F. Blanchard,
\emph{Fully positive topological entropy and topological mixing},
in \emph{Symbolic dynamics and its applications} (New Haven, CT, 1991),
Contemp. Math., vol.~135, Amer. Math. Soc., Providence, RI, 1992, pp.~95--105.

\bibitem{BHM}
F. Blanchard, B. Host, and A. Maass,
\emph{Topological complexity},
Ergodic Theory Dynam. Systems \textbf{20} (2000), no.~3, 641--662,
\url{https://doi.org/10.1017/S0143385700000341}.

\bibitem{Diestel}
R. Diestel,
\emph{Graph theory}, sixth ed., Graduate Texts in Mathematics, vol.~173,
Springer, Berlin, Heidelberg, 2025,
\url{https://doi.org/10.1007/978-3-662-70107-2}.

\bibitem{HY02}
W. Huang and X. Ye,
\emph{An explicit scattering, non-weakly mixing example and weak
disjointness},
Nonlinearity \textbf{15} (2002), no.~3, 849--862,
\url{https://doi.org/10.1088/0951-7715/15/3/320}.

\bibitem{HY}
W. Huang and X. Ye,
\emph{Topological complexity, return times and weak disjointness},
Ergodic Theory Dynam. Systems \textbf{24} (2004), no.~3, 825--846,
\url{https://doi.org/10.1017/S0143385703000543}.

\bibitem{HSY}
W. Huang, S. Shao, and X. Ye,
\emph{Mixing via sequence entropy},
in \emph{Algebraic and topological dynamics}, Contemp. Math., vol.~385,
Amer. Math. Soc., Providence, RI, 2005, pp.~101--122,
MR2180232, \url{https://doi.org/10.1090/conm/385/07193}.

\end{thebibliography}
\end{document}